\newcommand{\newsection}[1]{\setcounter{equation}{0} \section{#1}}
\numberwithin{equation}{section}
\newtheorem{propn}{Proposition}[section]
\newtheorem{thm}[propn]{Theorem}
\newtheorem{cor}[propn]{Corollary}
\newtheorem*{thm*}{Theorem}
\theoremstyle{definition}
\newcommand{\Hil}{\mathcal{H}}
 \newcommand{\D}{\mathbb{D}}
\newcommand{\vp}{\varphi}
\newcommand{\clb}{\mathcal{B}}
\newcommand{\cld}{\mathcal{D}}
\newcommand{\cle}{\mathcal{E}}
\newcommand{\clh}{\mathcal{H}}
\newcommand{\clk}{\mathcal{K}}
\newcommand{\clq}{\mathcal{Q}}
\newcommand{\raro}{\rightarrow}
\newcommand{\NI}{\noindent}
\begin{document}

\title{Factorizations of Contractions}

\author[Das] {B. Krishna Das}
\address{Department of Mathematics, Indian Institute of Technology Bombay, Powai, Mumbai, 400076, India}
\email{dasb@math.iitb.ac.in, bata436@gmail.com}

\author[Sarkar]{Jaydeb Sarkar}
\address{Indian Statistical Institute, Statistics and Mathematics Unit, 8th Mile, Mysore Road, Bangalore, 560059, India}
\email{jay@isibang.ac.in, jaydeb@gmail.com}

\author[Sarkar]{Srijan Sarkar}
\address{Indian Statistical Institute, Statistics and Mathematics Unit, 8th Mile, Mysore Road, Bangalore, 560059, India}
\email{srijan\_rs@isibang.ac.in, srijansarkar@gmail.com}

\dedicatory{Dedicated to Professor Rajendra Bhatia on the occasion
of his 65th birthday}

\subjclass[2010]{47A13, 47A20, 47A56, 47A68, 47B38, 46E20, 30H10}

\keywords{pair of commuting contractions, pair of commuting
isometries, Hardy space, factorizations, von Neumann inequality}

\begin{abstract}
The celebrated Sz.-Nagy and Foias theorem asserts that every pure
contraction is unitarily equivalent to an operator of the form
$P_{\mathcal{Q}} M_z|_{\mathcal{Q}}$ where $\mathcal{Q}$ is a
$M_z^*$-invariant subspace of a $\mathcal{D}$-valued Hardy space
$H^2_{\mathcal{D}}(\mathbb{D})$, for some Hilbert space
$\mathcal{D}$.

On the other hand, the celebrated theorem of Berger, Coburn and
Lebow on pairs of commuting isometries can be formulated as follows:
a pure isometry $V$ on a Hilbert space $\mathcal{H}$ is a product of
two commuting isometries $V_1$ and $V_2$ in
$\mathcal{B}(\mathcal{H})$ if and only if there exist a Hilbert
space $\mathcal{E}$, a unitary $U$ in $\mathcal{B}(\mathcal{E})$ and
an orthogonal projection $P$ in $\mathcal{B}(\mathcal{E})$ such that
$(V, V_1, V_2)$ and $(M_z, M_{\Phi}, M_{\Psi})$ on
$H^2_{\mathcal{E}}(\mathbb{D})$ are unitarily equivalent, where
\[
\Phi(z)=(P+zP^{\perp})U^*\quad \text{and} \quad
\Psi(z)=U(P^{\perp}+zP) \quad \quad (z \in \D).
\]

In this context, it is natural to ask  whether similar factorization
results hold true for pure contractions. The purpose of this paper is
to answer this question. More particularly, let $T$ be a pure
contraction on a Hilbert space $\mathcal{H}$ and let
$P_{\mathcal{Q}} M_z|_{\mathcal{Q}}$ be the Sz.-Nagy and Foias
representation of $T$ for some canonical $\mathcal{Q} \subseteq
H^2_{\mathcal{D}}(\mathbb{D})$. Then $T = T_1 T_2$, for some
commuting contractions $T_1$ and $T_2$ on $\mathcal{H}$, if and only
if there exist $\mathcal{B}(\mathcal{D})$-valued polynomials
$\varphi$ and $\psi$ of degree $ \leq 1$ such that $\mathcal{Q}$ is
a joint $(M_{\varphi}^*, M_{\psi}^*)$-invariant subspace,
\[
P_{\mathcal{Q}} M_z|_{\mathcal{Q}} = P_{\mathcal{Q}} M_{\varphi
\psi}|_{\mathcal{Q}} = P_{\mathcal{Q}} M_{\psi
\varphi}|_{\mathcal{Q}} \mbox{~and~} (T_1, T_2) \cong
(P_{\mathcal{Q}} M_{\varphi}|_{\mathcal{Q}}, P_{\mathcal{Q}}
M_{\psi}|_{\mathcal{Q}}).
\]
Moreover, there exist a Hilbert space $\mathcal{E}$ and an isometry
$V \in \mathcal{B}(\mathcal{D}; \mathcal{E})$ such that
\[
\varphi(z) = V^* \Phi(z) V \mbox{~and~} \psi(z) =
V^* \Psi(z) V \quad \quad (z \in \mathbb{D}),
\]
where the pair $(\Phi, \Psi)$, as defined above, is the Berger,
Coburn and Lebow representation of a pure pair of commuting
isometries on $H^2_{\cle}(\D)$. As an application, we obtain a
sharper von Neumann inequality for commuting pairs of contractions.
\end{abstract}
\maketitle

\newsection{Introduction}

Let $\clh$ be a Hilbert space and $V$ be an isometry on $\clh$. It
is a classical result, due to von Neumann and Wold (cf. \cite{NF}),
that $V$ is unitarily equivalent to $M_z \oplus U$ where $M_z$ is
the shift operator on an $\cle$-valued Hardy space $H^2_{\cle}(\D)$,
for some Hilbert space $\cle$, and $U$ is a unitary operator on
$\clh_u$, where
\[
\clh_u = \mathop{\cap}_{m=0}^\infty V^m \clh.
\]
We say that $V$ is \textit{pure} if $\clh_u = \{0\}$, or,
equivalently, if $V^{*m} \raro 0$ in the strong operator topology
(that is, $\|V^{*m} h\| \raro 0$ as $m \raro \infty$ for all $h \in
\clh$). Pure isometry, that is, shift operators on vector-valued
Hardy spaces play an important role in the study of general
operators that stems from the following result (see \cite{NF,
Nagy}):

\begin{thm}\label{NF}(\textsf{Sz.-Nagy and Foias})
Let $T$ be a pure contraction on a Hilbert space $\clh$. Then
$T$ and $P_{\clq} M_z|_{\clq}$ are unitarily equivalent, where
$\clq$ is a closed $M_z^*$-invariant subspace of a vector-valued
Hardy space $H^2_{\cld}(\D)$.
\end{thm}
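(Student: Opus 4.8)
The plan is to realize $T^*$ concretely as a piece of a backward shift by means of the defect operator of $T^*$. Write $\Dts = (I - TT^*)^{1/2}$ for the defect operator and $\dts = \overline{\Ran}\,\Dts$ for the defect space, and take $\cld = \dts$. The candidate intertwiner is the observability map $\Pi : \clh \raro H^2_{\dts}(\D)$ defined by
\[
\Pi h = \sum_{n \geq 0} z^n \ot \Dts T^{*n} h \qquad (h \in \clh).
\]
First I would record that this series converges in $H^2_{\dts}(\D)$, so that $\Pi$ is a well-defined linear map, and then set $\clq = \Ran\,\Pi$, the candidate model space.

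The crucial step is to show that $\Pi$ is an isometry, and this is exactly where purity enters. Computing the norm coefficientwise and using $\Dts^2 = I - TT^*$ gives
\[
\|\Pi h\|^2 = \sum_{n \geq 0} \langle (I - TT^*) T^{*n} h, T^{*n} h\rangle = \sum_{n \geq 0} \big( \|T^{*n} h\|^2 - \|T^{*(n+1)} h\|^2 \big),
\]
a telescoping sum whose value is $\|h\|^2 - \lim_{N \raro \infty} \|T^{*N} h\|^2$. The hypothesis that $T$ is pure forces $\|T^{*N} h\| \raro 0$, so the limit vanishes and $\|\Pi h\| = \|h\|$. Hence $\Pi$ is isometric and, in particular, $\clq = \Ran\,\Pi$ is closed. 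This is the only genuine obstacle in the argument; everything that follows is formal.

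Next I would verify the intertwining relation. Since $M_z^*$ on $H^2_{\dts}(\D)$ is the backward shift on Taylor coefficients, a direct comparison of coefficients yields $M_z^* \Pi h = \sum_{n\geq 0} z^n \ot \Dts T^{*n}(T^* h) = \Pi T^* h$, that is $M_z^* \Pi = \Pi T^*$. Consequently $M_z^* \clq \subseteq \clq$, so $\clq$ is the required closed $M_z^*$-invariant subspace of $H^2_{\cld}(\D)$. Taking adjoints gives $\Pi^* M_z = T \Pi^*$, and since $\Pi^*\Pi = I$ we obtain, for every $h \in \clh$, that $M_z \Pi h - \Pi T h$ lies in $\Ker \Pi^* = \clq^\perp$; applying $P_{\clq}$ and using $\Pi T h \in \clq$ yields $P_{\clq} M_z \Pi h = \Pi T h$. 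Thus $\Pi$, regarded as a unitary from $\clh$ onto $\clq$, intertwines $T$ with $P_{\clq} M_z|_{\clq}$, which is precisely the asserted unitary equivalence.
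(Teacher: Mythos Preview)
Your proof is correct and follows essentially the same approach as the paper: the paper's dilation map $(\Pi h)(z) = D_T(I_{\clh} - zT^*)^{-1}h$ (with $D_T = (I_{\clh} - TT^*)^{1/2}$, so $D_T$ there coincides with your $\Dts$) is exactly your $\sum_{n\geq 0} z^n \otimes \Dts T^{*n} h$ written in closed form. The paper merely records the formula and cites a reference for the details, whereas you have supplied the telescoping isometry computation and the intertwining verification explicitly; nothing further is needed.
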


Here the $\cld$-valued Hardy space over $\D$, denoted by
$H^2_{\cld}(\D)$, is defined by
\[
H^2_{\cld}(\D):= \{ f = \sum_{{k} \in \mathbb{N}} \eta_{{k}} z^{{k}}
\in \mathcal{O}(\D, \cld): \eta_j \in \cld, j \in \mathbb{N},
\|f\|^2 : = \sum_{{k} \in \mathbb{N}} \|\eta_{{k}}\|^2 < \infty\}.
\]
Recall that a contraction $T$ on a Hilbert space $\clh$ is
\textit{pure} (cf. \cite{JS}) if $T^{*m} \raro 0$ as $m \raro
\infty$ in the strong operator topology. Also note that, in the
above theorem, one can choose the coefficient Hilbert space
$\cld$ as $\overline{\mbox{ran}} (I - T T^*)$ (see \cite{NF}).

In contrast with the von-Neumann and Wold decomposition theorem for
isometries, the structure of commuting $n$-tuples of isometries, $n
\geq 2$, is much more complicated and very little, in general, is
known (see \cite{BDF1, BDF2, BDF3, BCL, BK, GY, Yang, Y1, Y2, Ru}).
However, for pure pairs of commuting isometries, the problem is more
tractable.

A pair of commuting isometries $(V_1, V_2)$ on a Hilbert space
$\clh$ is said to be \textit{pure} if $V_1V_2$ is a pure isometry,
that is,
\[
\mathop{\cap}_{m=0}^\infty V_1^{m} V_2^{m} \clh = \{0\}.
\]
With this as motivation, a pair of commuting contractions
$(T_1,T_2)$ is said to be \textit{pure} if $T_1T_2$ is a pure
contraction.

The concept of pure pair of commuting isometries introduced by
Berger, Coburn and Lebow \cite{BCL} is an important development in
the study of representation and Fredholm theory for $C^*$-algebras
generated by commuting isometries. They showed that a pair of
commuting isometries $(V_1, V_2)$ on a Hilbert space $\clh$ is pure
if and only if there exist a Hilbert space $\cle$, a unitary $U$ in
$\clb(\cle)$ and an orthogonal projection in $\clb(\cle)$ such that
$(V_1, V_2)$ on $\clh$ and $(M_{\Phi}, M_{\Psi})$ on
$H^2_{\cle}(\D)$ are jointly unitarily equivalent, where
\begin{equation}\label{PP}
\Phi(z)=(P+zP^{\perp})U^*\quad \text{and} \quad
\Psi(z)=U(P^{\perp}+zP) \quad \quad (z \in \D).
\end{equation}
Moreover, it follows that
\[
M_{\Phi} M_{\Psi} = M_{\Psi} M_{\Phi} = M_z,
\]
and $V_1V_2$ on $\clh$ and $M_z$ on $H^2_{\cle}(\D)$ are unitarily
equivalent (see also \cite{BDF3, GG}). More precisely, if $\Pi :
\clh \raro H^2_{\cle}(\D)$ denotes the unitary map, implemented by
the Wold and von Neumann decomposition of the pure isometry $V_1
V_2$ with $\cle = \mbox{ran} (I - V_1 V_2 V_1^* V_2^*)$ (cf.
\cite{JS}), then
\[
\Pi V_1 = M_{\Phi} \Pi, \quad \mbox{~and~} \quad  \Pi V_2 = M_{\Psi}
\Pi.
\]

\textsf{In what follows, for a triple $(\cle, U, P)$ as above we let
$\Phi, \Psi \in H^\infty_{\clb(\cle)}(\D)$ denote the isometric
multipliers as defined in (\ref{PP}). We call
 $(M_{\Phi}, M_{\Psi})$ the pair of isometries associated with the triple $(\cle, U,
P)$.}

Our work is motivated by the following equivalent interpretations of
the Berger, Coburn and Lebow's characterizations of pure pairs of
commuting isometries:

\NI \textsf{($I$)} Let $(V_1, V_2)$ be a pure pair of commuting
isometries and let $M_z$ on $H^2_{\cle}(\D)$ be the von Neumann and
Wold decomposition representation of $V_1V_2$. Then there exist a
unitary $U$ and an orthogonal projection $P$ in $\clb(\cle)$ such
that the representations of $V_1$ and $V_2$ in
$\clb(H^2_{\cle}(\D))$ are given by $M_{\Phi}$ and $M_{\Psi}$,
respectively.

\NI \textsf{($II$)} Let $(X, Y)$ be a pair of commuting isometries
in $\clb(H^2_{\cle}(\D))$. Moreover, let $X$ and $Y$ are Toeplitz
operators \cite{NF} with analytic symbols from
$H^\infty_{\clb(\cle)}(\D)$. Then $M_z = XY$ if and only if there
exist a unitary $U$ and an orthogonal projection $P$ in $\clb(\cle)$
such that $(X, Y) = (M_{\Phi}, M_{\Psi})$.

In this paper we shall obtain similar results for pure pairs of
commuting contractions acting on Hilbert spaces. More specifically,
summarizing Theorems \ref{main1}, \ref{main2} and \ref{factor-Q} and
Corollary \ref{cor1}, we have the following: Let $T$ be a pure
contraction and $(T_1, T_2)$ be a pair of commuting contractions on
a Hilbert space $\clh$. Let $\clq$ be the Sz.-Nagy and Foias
representation of $T$, that is, $\clq$ is a $M_z^*$-invariant
subspace of a vector-valued Hardy space $H^2_{\cld}(\D)$ and $T$ and
$P_{\clq} M_z|_{\clq}$ are unitarily equivalent (see Theorem
\ref{NF} and Section \ref{prel}) where $\cld = \overline{\mbox{ran}}
(I - T T^*)$. Then the following are equivalent:

(i) $T = T_1 T_2$.

(ii) There exist a triple $(\cle, U, P)$ and a joint $(M_z^*,
M_{\Phi}^*,M_{\Psi}^*)$-invariant subspace $\tilde{\clq}$ of
$H^2_{\cle}(\D)$ such that
\[
T_1\cong P_{\tilde{\clq}}M_{\Phi}|_{\tilde{\clq}}, T_2\cong
P_{\tilde{\clq}}M_{\Psi}|_{\tilde{\clq}}, T \cong P_{\tilde{\clq}}
M_z|_{\tilde{\clq}} \text{~and~} M_{\Phi} M_{\Psi} = M_{\Psi}
M_{\Phi} = M_z.
\]
In other words, $(T_1, T_2, T)$ on $\clh$ dilates to $(M_{\Phi},
M_{\Psi}, M_z)$ on $H^2_{\cle}(\D)$.

(iii) There exist $\clb(\cld)$-valued polynomials $\vp$ and $\psi$
of degree $ \leq 1$ such that $\clq$ is a joint $(M_{\vp}^*,
M_{\psi}^*)$-invariant subspace,
\[
P_{\clq} M_z|_{\clq} = P_{\clq} M_{\vp \psi}|_{\clq} = P_{\clq}
M_{\psi \vp}|_{\clq},
\]
and
\[
(T_1, T_2) \cong (P_{\clq} M_{\vp}|_{\clq}, P_{\clq}
M_{\psi}|_{\clq}).
\]

In particular, if $T = T_1 T_2$ is pure then the Sz.-Nagy and Foias
representations of $T_1$ and $T_2$ on $\clq$ are given by $P_{\clq}
M_{\vp}|_{\clq}$ and $P_{\clq} M_{\psi}|_{\clq}$, respectively.
Moreover, it turns out that the pair $(M_{\vp}, M_{\psi})$ can be chosen as
\[
\vp(z) = V^* \Phi(z) V \mbox{~and~} \psi(z) =
V^* \Psi(z) V \quad \quad (z \in \D),
\]
where  $V \in \clb(\cld; \cle)$ is an isometry and $\Phi$, $\Psi$ are
the isometric multipliers associated with the triple $(\cle, U, P)$
from condition (ii).
As an application of our results we give a sharper von Neumann
inequality for commuting pairs of contractions: Let $(T_1,T_2)$ be a
commuting pair of contractions on $\Hil$. Also assume that $T_1
T_2$ is a pure contraction and $\mbox{rank~}(I_{\clh} - T_i
T_i^*)<\infty$, $i=1,2$. Then there exists a variety $V$ in $\D^2$
such that
\[
\|p(T_1,T_2)\|\le \sup_{(z_1,z_2)\in  V}|p(z_1,z_2)| \quad \quad (p
\in \mathbb{C}[z_1, z_2]).
\]

The plan of the paper is the following. Section 2 contains some
preliminaries and a key dilation result. In Section \ref{DPI}, we
prove that a pure pair of commuting contractions always dilates to a
pure pair of commuting isometries. Our construction is more explicit
for pairs of contractions with finite dimensional defect spaces. In
Section \ref{Fact}, we obtain explicit representations of commuting
and contractive factors of a pure contraction in its corresponding
Sz.-Nagy and Foias space. In the last section, we consider von
Neumann inequality for pure pair of commuting contractions.

\newsection{Preliminaries}\label{prel}

In this section, we set notation and definitions and discuss some
preliminaries. Also we prove a basic dilation result in Theorem
\ref{dilation}. This result will play a fundamental role throughout
the remainder of the paper.

Let $T$ be a contraction on a Hilbert space $\clh$ (that is, $\|T f
\| \leq \|f\|$ for all $f \in \clh$ or, equivalently, if $I_{\clh} -
T T^* \geq 0$) and let $\cle$ be a Hilbert space. Then $M_z$ on
$H^2_{\cle}(\D)$ is called an isometric dilation of $T$ (cf.
\cite{JS}) if there exists an isometry $\Gamma : \clh \raro
H^2_{\cle}(\D)$ such that
\[
\Gamma T^* = M_z^* \Gamma.
\]
Similarly, a pair of commuting operators $(U_1, U_2)$ on $\clk$ is
said to be a dilation of a commuting pair of operators $(T_1, T_2)$
on $\clh$ if there exists an isometry $\Gamma : \clh \raro \clk$
such that
\[
\Gamma T_j^* = U_j^* \Gamma \quad \quad (j = 1, 2).
\]
Note that, in this case, $\clq := \mbox{ran} \Gamma$ is a joint
$(U_1^*, U_2^*)$-invariant subspace of $\clk$ and
\[
T_j \cong P_{\clq} U_j|_{\clq} \quad \quad (j=1, 2).
\]

Now let $T$ be a contraction on a Hilbert space $\clh$. Set
\[
\cld_T = \overline{\mbox{ran}}(I_{\clh} - T T^*), \quad \quad D_T =
(I_{\clh} - T T^*)^{\frac{1}{2}}.
\]
If in addition, $T$ is pure then $M_z$ on $H^2_{\cld_T}(\D)$,
induced by the isometry $\Pi : \clh \raro H^2_{\cld_T}(\D)$, is an
isometric dilation of $T$ (cf. \cite{JS}), where
\begin{equation}\label{dil-def}
(\Pi h)(z) = D_T(I_{\clh} - z T^*)^{-1}h \quad \quad (z \in \D, h
\in \clh).
\end{equation}
In particular, this yields a proof of Theorem \ref{NF} that every
pure contraction is unitarily equivalent to the compression of $M_z$
to an $M_z^*$-invariant closed subspace of a vector-valued Hardy
space.

It is also important to note that the above dilation is minimal,
that is,
\begin{equation}\label{min}
H^2_{\cld_T}(\D) =
\overline{\mbox{span}} \{z^m \Pi f : m \in \mathbb{N}, f \in \clh\},
\end{equation}
and hence unique in an appropriate sense (see \cite{NF}).

Our considerations will also rely on the techniques of transfer
functions (cf. \cite{DS}). Let $\clh_1$ and $\clh_2$ be two
 Hilbert spaces, and
\[U = \begin{bmatrix}A&B\\C&D\end{bmatrix} \in \clb(\clh_1 \oplus
\clh_2),\]be a unitary operator. Then the $\clb(\clh_1)$-valued
analytic function $\tau_U$ on $\mathbb{D}$ defined by
\[\tau_U (z) := A + z B (I-z D)^{-1} C \quad \quad (z \in \D), \]
is called the \textit{transfer function} of $U$. Using $U^* U = I$,
a standard and well known computation yields (cf. \cite{DS})
\begin{equation}\label{isometry}
I- \tau_U (z)^*\tau_U (z) = (1-|z|^2) C^*(I-\bar{z} D^*)^{-1}(I-z
D)^{-1} C \quad \quad (z\in\D).
\end{equation}
However, in this paper, we will mostly deal with transfer functions
corresponding to unitary matrices of the form $U =
\begin{bmatrix}A&B\\C&0\end{bmatrix}$. In this case, it follows (see
(\ref{isometry})) from the identity
\[
I- \tau_U (z)^*\tau_U (z) = (1-|z|^2)C^*C \quad \quad (z \in \D)
\]
that $\tau_U$ is a $B(\clh_1)$-valued inner function \cite{NF}.

Now let $(T_1, T_2)$ be a pair of commuting contractions. Since
\[(I_{\clh} - T_1 T_1^*) + T_1 (I_{\clh} - T_2 T_2^*) T_1^* = T_2
(I_{\clh} - T_1 T_1^*) T_2^* + (I_{\clh} - T_2 T_2^*),\] it follows
that
\[ \|D_{T_1}h\|^2+
\|D_{T_2}T_1^*h\|^2=\|D_{T_1}T_2^*h\|^2+\|D_{T_2}h\|^2 \quad
(h\in\Hil).
\]
Thus \[U : \{D_{T_1} h \oplus D_{T_2} T_1^* h : h \in \clh\} \raro
\{D_{T_1} T_2^* h \oplus D_{T_2} h : h \in \clh\}\]defined by
\begin{equation}\label{U-h}
U\left(D_{T_1}h, D_{T_2}T_1^*
h\right)=\left(D_{T_1} T_2^*h, D_{T_2}h\right) \quad \quad (h \in
\clh),
\end{equation}
is an isometry. This operator will play a very important role in the
sequel.

We now formulate the main theorem of this section, a result which
will play a very important part in our considerations later on. Here
the proof is similar in spirit to the main dilation result of
\cite{DS}.

Let $\clh$ and $\cle$ be Hilbert spaces and let $(S,T)$ be a pair of
commuting contractions on $\Hil$. Let $T$ be pure and $V \in
\clb(\cld_{T}; \cle)$ be an isometry. Then the isometric dilation of
$T$, $\Pi: \Hil\to H^2_{\cld_T}(\D)$ as defined in (\ref{dil-def}),
allows us to define an isometry $\Pi_V \in \clb(\clh;
H^2_{\cle}(\D))$ by setting
\[
\Pi_V : = (I_{H^2(\mathbb{D})} \otimes V) \Pi.
\]
It is easy to check that
\[
\Pi_V T^* = (M_z^* \otimes I_{\cle}) \Pi_V,
\]
and hence we conclude that $M_z$ on $H^2_{\cle}(\D)$ is an isometric
dilation of $T$. In particular, $\clq=\Pi_V \Hil$ is a
$M_z^*$-invariant subspace of $ H^2_{\cle}(\D)$ and $T\cong
P_{\clq}M_z|_{\clq}$.

\begin{thm}\label{dilation}
With the notations as above, let
\[
U=\begin{bmatrix}A&B\\C&0\end{bmatrix}:\cle\oplus\cld_{S}\to\cle\oplus\cld_{S},
\]
be a unitary operator such that
\[U(V D_{T}h, D_{S}T^*h)=(V D_TS^*h, D_Sh) \quad \quad (h\in\Hil).
\]
We denote by $\Phi(z)=A^*+ z C^*B^*$ the transfer function of $U^*$.
Then $\Phi$ is a $\clb(\cle)$-valued inner function and
\[
\Pi_V S^* = M_{\Phi}^* \Pi_V.
\]
In particular, $\clq=\Pi_V \Hil$ is a joint $(M_z^*,
M_{\Phi}^*)$-invariant subspace of $ H^2_{\cle}(\D)$ and
\[
T^*\cong M_z^*|_{\clq}\quad \text{ and } S^*\cong
M_{\Phi}^*|_{\clq}.
\]
\end{thm}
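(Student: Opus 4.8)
The plan is to run the transfer-function machinery in parallel with the dilation $\Pi$ of the pure contraction $T$, exactly as in the main dilation result of \cite{DS}. First I would verify that $\Phi(z)=A^*+zC^*B^*$ is well defined and inner: since $U$ has the block form $\left[\begin{smallmatrix}A&B\\C&0\end{smallmatrix}\right]$, its adjoint $U^*=\left[\begin{smallmatrix}A^*&C^*\\B^*&0\end{smallmatrix}\right]$ again has a zero $(2,2)$-entry, so $\Phi=\tau_{U^*}$ is precisely the transfer function of a unitary of the form $\left[\begin{smallmatrix}A^*&C^*\\B^*&0\end{smallmatrix}\right]$ with $D$-block zero. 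Hence by the identity $I-\Phi(z)^*\Phi(z)=(1-|z|^2)B B^*$ displayed in the excerpt (the specialization of \eqref{isometry} to the zero-corner case, applied to $U^*$), $\Phi$ is a $\clb(\cle)$-valued inner function.

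The heart of the argument is the intertwining identity $\Pi_V S^*=M_\Phi^*\Pi_V$. I would check this coefficient-by-coefficient in the power-series expansions, testing against an arbitrary $h\in\clh$. On the one hand, $(\Pi_V h)(z)=V D_T(I-zT^*)^{-1}h=\sum_{m\ge0}VD_TT^{*m}h\,z^m$, so $(\Pi_V S^*h)(z)=\sum_{m\ge0}VD_TT^{*m}S^*h\,z^m$. On the other hand, writing $\Phi(z)^*=A+zBC$ one computes $M_\Phi^*$ on the series; the $z^m$-coefficient of $M_\Phi^*\Pi_V h$ is $A^*{}^{\!*}(\cdots)$ — more precisely one gets a recursion mixing $VD_TT^{*m}h$-type terms with the $B,C$ blocks. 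The defining relation $U(VD_Th,D_ST^*h)=(VD_TS^*h,D_Sh)$, read out as the two scalar equations $A\,VD_Th+B\,D_ST^*h=VD_TS^*h$ and $C\,VD_Th=D_Sh$, is exactly what is needed to collapse that recursion: substituting $D_Sh=CVD_Th$ from the second equation into the iterated use of the first, and using $T^*{}^{\!m}$ in place of $h$ at stage $m$, telescopes the two sides into equality. This is the same bookkeeping as in \cite{DS}, just carried along the pure isometric dilation $M_z$ of $T$ tensored up by $V$; the delicate point — and the step I expect to be the main obstacle — is keeping the two block-equations applied at the shifted vectors $T^{*m}h$ correctly aligned, since $S^*$ and $T^*$ do not commute with $D_T$ individually, only the combined relation defining $U$ is available. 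One should also confirm that the relation defining $U$ determines $U$ consistently on all of $\cle\oplus\cld_S$: it is prescribed on $\overline{\operatorname{span}}\{(VD_Th,D_ST^*h):h\in\clh\}$ and, since $\|VD_Th\|^2+\|D_ST^*h\|^2=\|D_Th\|^2+\|D_ST^*h\|^2=\|D_TS^*h\|^2+\|D_Sh\|^2=\|VD_TS^*h\|^2+\|D_Sh\|^2$ (the Brehmer-type identity quoted just before \eqref{U-h}, together with $V$ isometric), the map is a well-defined isometry there and extends to a unitary by hypothesis.

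Once $\Pi_V S^*=M_\Phi^*\Pi_V$ is established, the remaining conclusions are immediate. Since $\Pi_V$ is an isometry and $\Pi_V T^*=(M_z^*\otimes I_\cle)\Pi_V$ was already noted above, $\clq=\operatorname{ran}\Pi_V=\Pi_V\clh$ is invariant under both $M_z^*$ and $M_\Phi^*$; taking adjoints of the two intertwining relations and compressing to $\clq$ gives $T\cong P_\clq M_z|_\clq$ and $S\cong P_\clq M_\Phi|_\clq$, equivalently $T^*\cong M_z^*|_\clq$ and $S^*\cong M_\Phi^*|_\clq$, which is the assertion. No minimality of the dilation is needed for this statement, so nothing further is required.
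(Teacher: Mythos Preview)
Your proposal is correct and follows essentially the paper's route: extract the two block equations $VD_TS^*h = AVD_Th + BD_ST^*h$ and $D_Sh = CVD_Th$ from the unitary relation, combine them into $VD_TS^* = AVD_T + BCVD_TT^*$, and then match coefficients (the paper does this by pairing both sides against $z^n\eta$, which is the same computation). One minor slip worth flagging: $M_\Phi^*$ is not multiplication by ``$\Phi(z)^*=A+zBC$'' --- rather its action on Taylor coefficients is $(M_\Phi^*g)_n = Ag_n + BCg_{n+1}$, and it is precisely this that the combined identity (applied with $T^{*n}h$ in place of $h$, using $S^*T^*=T^*S^*$) verifies.
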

\begin{proof}
We only need to prove that $\Pi_V S^* = M_{\Phi}^* \Pi_V$. Now for
each $h\in\Hil$ we have the equality

\[
\begin{bmatrix}A&B\\C&0\end{bmatrix}
\begin{bmatrix} V D_T h \\ D_S T^* h \end{bmatrix}
=
\begin{bmatrix} V D_T S^* h \\ D_S h \end{bmatrix},
\]
that is,
\[
V D_T S^* h = A V D_T h + B D_S T^* h, \quad\mbox{and} \quad D_S h =
C V D_T h.
\]
This implies
\[
V D_T S^* = A V D_T + B C V D_T T^*.
\]
Now if $n \geq 1$, $h\in\Hil$ and $\eta \in \cle$, then
\[
\begin{split}
\langle M_{\Phi}^* \Pi_V h, z^n \eta \rangle & = \langle(I\otimes V) D_T (I -
z T^*)^{-1} h, (A^* + z C^* B^*)(z^n \eta) \rangle
\\
& = \langle V D_T T^{*n} h, A^* \eta \rangle + \langle V D_T T^{*
(n+1)} h, C^* B^* \eta \rangle
\\
& = \langle (A V D_T + B C V D_T T^{*}) (T^{*n} h), \eta \rangle
\\
& = \langle V D_T S^* (T^{*n} h), \eta \rangle.
\end{split}
\]
On the other hand, since
\[
\langle \Pi_V S^* h, z^n \eta \rangle = \langle V D_T (I - z
T^*)^{-1} S^* h, z^n \eta \rangle = \langle (V D_T S^*) (T^{*n} h),
\eta \rangle,
\]
we get $\Pi_V S^* = M_{\Phi}^* \Pi_V$. This completes the proof.
\end{proof}

\newsection{Dilating to pure isometries}\label{DPI}

In this section we prove that a pure pair of commuting contractions
dilates to a pure pair of commuting isometries. We describe the
construction of dilations more explicitly in the case of finite
dimensional defect spaces.

\begin{thm}\label{main1}
Let $(T_1,T_2)$ be a pure pair of commuting contractions on $\Hil$
and $\mbox{dim~} \cld_{T_j} < \infty$, $j = 1, 2$. Then $(T_1,T_2)$
dilates to a pure pair of commuting isometries.
\end{thm}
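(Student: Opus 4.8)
The plan is to bootstrap off Theorem~\ref{dilation}, applied twice, to produce commuting isometric dilations of $T_1$ and $T_2$ whose product is the (already pure) isometric dilation of $T=T_1T_2$. Write $T = T_1 T_2$; since $(T_1,T_2)$ is a pure pair, $T$ is a pure contraction, so $\Pi : \clh \to H^2_{\cld_T}(\D)$ from \eqref{dil-def} gives the minimal isometric dilation $M_z$ of $T$. The strategy is: first dilate $T_1$ using the transfer-function machinery of Theorem~\ref{dilation} with $(S,T)=(T_1,T)$, obtaining an inner multiplier $\Phi$ with $M_{\Phi} M_{\Psi'} = M_z$ forced for some complementary factor; then check that the corresponding $M_{\Psi}$ built symmetrically from $T_2$ commutes with $M_{\Phi}$, has $M_{\Phi}M_{\Psi}=M_z$, and compresses back to $T_2$. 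Because $\dim \cld_{T_j}<\infty$, the spaces on which the relevant partial isometries from \eqref{U-h} are defined are finite-dimensional, so those partial isometries extend to honest unitaries $U_1, U_2$ on $\cle \oplus \cld_{T_j}$ for a suitable coefficient space $\cle$ — this is exactly the point where finite-dimensionality of the defect spaces is used, and it is the first thing to nail down.

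Concretely, I would choose $\cle$ large enough (e.g. $\cle = \cld_{T_1} \oplus \cld_{T_2}$ or $\Comp^N$ for $N$ large) and an isometry $V\in\clb(\cld_T;\cle)$, then set $\Pi_V = (I\otimes V)\Pi$ as in the paragraph preceding Theorem~\ref{dilation}. Apply Theorem~\ref{dilation} with $S=T_1$: the identity
\[
\|D_{T_1}h\|^2 + \|D_{T}T_1^*h\|^2 = \|D_{T_1}T^*h\|^2 + \|D_{T}h\|^2 \qquad (h\in\clh),
\]
which is the $(T_1,T)$-instance of the displayed Parseval identity in Section~\ref{prel} (valid because $T = T_1 T_2$ and $T_1$ commutes with $T$), shows the map $(VD_T h, D_{T_1}T^*h)\mapsto (VD_T T_1^* h, D_{T_1}h)$ is a well-defined isometry between subspaces of $\cle \oplus \cld_{T_1}$; by the dimension count it extends to a unitary $U_1 = \left[\begin{smallmatrix} A_1 & B_1 \\ C_1 & 0\end{smallmatrix}\right]$. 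Theorem~\ref{dilation} then yields a $\clb(\cle)$-valued inner $\Phi$ of the transfer-function form $\Phi(z) = A_1^* + z C_1^* B_1^*$ (hence a polynomial multiplier of degree $\le 1$, which is the shape promised in the main theorem) with $\Pi_V T_1^* = M_\Phi^* \Pi_V$ and $\Pi_V T^* = M_z^* \Pi_V$. Run the same argument with $S=T_2$ to get an inner $\Psi(z) = A_2^* + zC_2^*B_2^*$ with $\Pi_V T_2^* = M_\Psi^* \Pi_V$.

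It then remains to verify that $(M_\Phi, M_\Psi)$ is a \emph{pure pair of commuting isometries} dilating $(T_1,T_2)$. On the range $\clq = \Pi_V\clh$ we have $M_z^*|_\clq \cong T^* = T_2^* T_1^* \cong M_\Psi^* M_\Phi^*|_\clq$, and minimality of the dilation $M_z$ of $T$ (equation \eqref{min}) will let me propagate the operator identity $M_\Phi M_\Psi = M_z$ from $\clq$ to all of $H^2_\cle(\D)$: concretely, $M_\Phi M_\Psi \Pi_V = \Pi_V T_1 T_2 = \Pi_V T$ and $M_\Phi M_\Psi (M_z^* \Pi_V h) = \ldots$, and since $M_z$-translates of $\clq$ are dense, both $M_\Phi M_\Psi = M_z$ and commutativity $M_\Phi M_\Psi = M_\Psi M_\Phi$ follow (equivalently, one checks the symbol identity $\Phi\Psi = z I_\cle = \Psi\Phi$ on $\D$ directly from the transfer-function formulas and the relation $D_{T_1}$–$D_{T_2}$ identity coming from $T=T_1T_2$). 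Finally $M_\Phi M_\Psi = M_z$ with $M_z$ pure forces the pair $(M_\Phi, M_\Psi)$ to be pure. The main obstacle is precisely the compatibility step: ensuring a \emph{single} coefficient space $\cle$ and a single isometry $V$ can host \emph{both} unitaries $U_1,U_2$ so that the resulting $M_\Phi, M_\Psi$ genuinely commute and multiply to $M_z$ — i.e.\ that the two applications of Theorem~\ref{dilation} can be carried out simultaneously rather than independently; I expect this to require choosing $U_1,U_2$ coherently (so that $\Phi\Psi=zI$ holds as an identity of symbols, not just after compression), and the finite-dimensional hypothesis is what gives the freedom to do so.
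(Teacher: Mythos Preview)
Your overall strategy---apply Theorem~\ref{dilation} once with $S=T_1$ and once with $S=T_2$ over a common coefficient space $\cle$---is exactly the paper's, but the execution has a real gap at the compatibility step you yourself flag, and your proposed workaround via minimality does not succeed. The dilation $\Pi_V=(I\otimes V)\Pi$ is \emph{not} minimal on $H^2_\cle(\D)$ once $V:\cld_T\to\cle$ is a proper isometry: the $M_z$-translates of $\clq=\Pi_V\clh$ span only $H^2(\D)\otimes V\cld_T$, a proper subspace, so the identity $M_\Phi M_\Psi=M_z$ (let alone commutativity) cannot be propagated from $\clq$ to all of $H^2_\cle(\D)$ by density. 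A second issue is that Theorem~\ref{dilation} requires the unitary to have block form $\left[\begin{smallmatrix}A&B\\C&0\end{smallmatrix}\right]$; a generic ``dimension-count'' unitary extension of your partial isometry need not have vanishing $(2,2)$-entry, so you cannot invoke the theorem as stated.

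The paper resolves both problems not by verifying compatibility \emph{a posteriori} but by building it in from the start. It fixes $\cle=\cld_{T_1}\oplus\cld_{T_2}$, takes the specific isometry $V(D_Th)=(D_{T_1}h,\,D_{T_2}T_1^*h)$, uses finite-dimensionality to extend the isometry of \eqref{U-h} to a single unitary $U$ on $\cle$, and sets $P=\iota_2\iota_2^*$ the projection onto $\{0\}\oplus\cld_{T_2}$. From this single pair $(U,P)$ it writes down
\[
U_1=\begin{bmatrix}UP&U\iota_1\\ \iota_1^*&0\end{bmatrix},\qquad
U_2=\begin{bmatrix}P^\perp U^*&\iota_2\\ \iota_2^*U^*&0\end{bmatrix},
\]
which visibly have zero $(2,2)$-block, and verifies by a direct computation that $U_j(VD_Th,\,D_{T_j}T^*h)=(VD_TT_j^*h,\,D_{T_j}h)$. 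The resulting transfer functions come out in Berger--Coburn--Lebow form, $\Phi(z)=(P+zP^\perp)U^*$ and $\Psi(z)=U(P^\perp+zP)$, for which $\Phi\Psi=\Psi\Phi=zI_\cle$ is an algebraic identity---no appeal to minimality is needed. The idea missing from your sketch is precisely this: manufacture $U_1$ and $U_2$ from a \emph{single} datum $(U,P)$ so that the BCL factorization is automatic, rather than extending two partial isometries independently and trying to reconcile them afterward.
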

\begin{proof}
Set $\cle:=\cld_{T_1}\oplus\cld_{T_2}$ and $T:=T_1T_2$. Let
$\Pi:\Hil\to H^2_{D_{T}}(\D)$ be the isometric dilation of $T$ as
defined in (\ref{dil-def}).
 Now observe that the equality
\[
I - T T^* = I - T_1 T_2 T_1^* T_2^* = (I - T_1 T_1^*) + T_1 (I - T_2
T_2^*) T_1^*,
\]
implies that the operator $V \in \clb(\cld_{T}; \cle)$ defined by
\[
V(D_T h) =
(D_{T_1}h, D_{T_2}T_1^*h)\quad \quad (h\in\Hil),
\]
is an isometry.
Consequently,

\begin{equation}\label{piv}
\Pi_V:= (I_{H^2(\D)} \otimes V) \Pi : \clh \raro H^2_{\cle}(\D)
\end{equation}
is an isometric dilation of $T$, and hence $T \cong P_{\clq}
M_z|_{\clq}$ where $\clq = \Pi_V \Hil$ is a $M_z^*$-invariant
subspace of $H^2_{\cle}(\D)$ (see the proof of Theorem
\ref{dilation}). Let $\iota_j : \cld_{T_j} \raro \cle$, $j = 1,2$, be
the inclusion maps, defined by
\[
\iota_1(h_1) = (h_1,0)\quad  \mbox{and} \quad \iota_2 (h_2) = (0,h_2)
\quad \quad (h_1 \in \cld_{T_1}, h_2 \in \cld_{T_2}).
\]
Then $P : = \iota_2 \iota_2^* \in \clb(\cle)$ is the orthogonal
projection onto $\cld_{T_2}$, that is,
\[
P(h_1,h_2) = (0,h_2) \quad \quad \quad ((h_1,h_2) \in \cle).
\]
Thus, $\iota_1 \iota_1^* = P^\perp$ is the orthogonal projection
onto $\cld_{T_1}$, and so
\[
 \begin{bmatrix} P & \iota_1\\ \iota_1^* & 0\end{bmatrix}:
 \cle \oplus \cld_{T_1}\to \cle \oplus \cld_{T_1}
\]
is a unitary. Now since $\mbox{dim~} \cle < \infty$, it follows that
the isometry $U$, as defined in (\ref{U-h}), extends to a unitary,
denoted again by $U$, on $\cle$. In particular, there exists a
unitary operator $U$ on $\cle$ such that
\[
U(D_{T_1}T_2^*h, D_{T_2}h) = (D_{T_1}h, D_{T_2}T_1^*h) \quad \quad
(h\in\Hil).
\]
Then
\[
U_1 =
\begin{bmatrix} U & 0\\0 &I
\end{bmatrix}
\begin{bmatrix}
P & \iota_1\\
\iota_1^*  & 0
\end{bmatrix}
=
\begin{bmatrix}
UP& U \iota_1\\
\iota_1^*& 0
\end{bmatrix},
\]
is a unitary operator in $\clb(\cle\oplus\cld_{T_1})$. Moreover, for
all $h\in\Hil$, we have
\[
\begin{split}
 U_1\big(V(D_Th),D_{T_1}T^*h\big)&=U_1\big(D_{T_1}h,D_{T_2}T_1^*h,D_{T_1}T_1^*T_2^*h\big)\nonumber\\
 &=\big(U(D_{T_1}T_1^*T_2^*h, D_{T_2}T_1^*h),D_{T_1}h\big)\nonumber\\
 &=\big(D_{T_1}T_1^*h,D_{T_2}(T_1^*)^2h, D_{T_1}h\big)\nonumber\\
 &=\big(V(D_{T}T_1^*h), D_{T_1}h\big).
\end{split}
\]
Consequently, by Theorem~\ref{dilation} we have
\[
\Pi_V T_1^* = M_{\Phi}^* \Pi_V,
\]
where
\[
\Phi(z)= PU^*+ z \iota_1\iota_1^*U^*= (P+zP^{\perp})U^*
\quad \quad (z \in \D),
\]
is the transfer function of the unitary operator $U_1^*$. Similarly,
if we define a unitary $U_2 \in \clb(\cle\oplus \cld_{T_2})$ by
\[
U_2 = \begin{bmatrix} P^{\perp}& \iota_2\\ \iota^*_2& 0
\end{bmatrix}
\begin{bmatrix}
U^* & 0\\
0& I
\end{bmatrix}
=\begin{bmatrix}
P^{\perp}U^* &\iota_2\\
\iota_2^*U^*& 0
\end{bmatrix},
\]
then
\[
 U_2\big(V(D_{T}h),D_{T_2}T^*h\big)=\big(V(D_{T}T_2^*h),
 D_{T_2}h\big) \quad \quad (h\in\Hil),
\]
and hence by Theorem~\ref{dilation}, we have
\[
\Pi_V T_2^* = M_{\Psi}^* \Pi_V,
\]
where
\[\Psi(z)= U P^{\perp}+ z U\iota_2\iota_2^*=U(P^{\perp}+ zP),\]
is the transfer function for the unitary operator $U_2^*$. This
completes the proof that the pure pair of commuting isometries
$(M_{\Phi}, M_{\Psi})$ on $H^2_\cle(\D)$ corresponding to the triple
$(\cle, U, P)$ dilates $(T_1,T_2)$.
\end{proof}

We will now go on to give a proof of the general result. The proof
is essentially the same as the previous theorem except the
constructions of unitary operators and inclusion maps.

\begin{thm}\label{main2}
Let $(T_1,T_2)$ be a pure pair of commuting contractions on $\Hil$.
Then $(T_1,T_2)$ dilates to a pure pair of commuting isometries.
\end{thm}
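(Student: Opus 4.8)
The plan is to deduce this from Theorem~\ref{main1} by first passing to an enlarged Hilbert space on which the defect spaces become large enough to run the one step of the proof of Theorem~\ref{main1} that required finite dimensionality.

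Fix an infinite-dimensional Hilbert space $\clg$ with $\dim\clg\ge\dim(\cld_{T_1}\oplus\cld_{T_2})$, set $\wt{\clh}:=\clh\oplus\clg$, and put $\wt{T}_j:=T_j\oplus 0\in\clb(\wt{\clh})$ for $j=1,2$. First I would check that $(\wt{T}_1,\wt{T}_2)$ is again a pure pair of commuting contractions: indeed $\wt{T}_1\wt{T}_2=(T_1T_2)\oplus 0$, so $(\wt{T}_1\wt{T}_2)^{*m}=(T_1T_2)^{*m}\oplus 0\to 0$ in the strong operator topology because $T_1T_2$ is pure. Moreover the isometry $\Gamma:\clh\to\wt{\clh}$, $\Gamma h=(h,0)$, satisfies $\Gamma T_j^*=\wt{T}_j^*\Gamma$, so $(\wt{T}_1,\wt{T}_2)$ dilates $(T_1,T_2)$; composing dilation maps, any pure pair of commuting isometries that dilates $(\wt{T}_1,\wt{T}_2)$ then dilates $(T_1,T_2)$ as well. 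Hence it suffices to prove the theorem for $(\wt{T}_1,\wt{T}_2)$, and for this pair the point of the enlargement is that $I_{\wt{\clh}}-\wt{T}_j\wt{T}_j^*=(I_{\clh}-T_jT_j^*)\oplus I_{\clg}$, so that $\cld_{\wt{T}_j}=\cld_{T_j}\oplus\clg$ and $D_{\wt{T}_j}=D_{T_j}\oplus I_{\clg}$.

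Next I would repeat the proof of Theorem~\ref{main1} with $(\wt{T}_1,\wt{T}_2)$ in place of $(T_1,T_2)$. The only point in that proof at which $\dim\cld_{T_j}<\infty$ was used is the assertion that the isometry $U$ of~(\ref{U-h}) extends to a unitary on $\cld_{T_1}\oplus\cld_{T_2}$; everything else --- the isometry $V$ and the dilation map $\Pi_V$, the inclusion maps $\iota_1,\iota_2$, the projection $P$, the unitaries $U_1,U_2$, and the two applications of Theorem~\ref{dilation} --- is valid in arbitrary dimension. Thus the whole argument goes through provided the corresponding isometry $\wt{U}$ for the pair $(\wt{T}_1,\wt{T}_2)$ extends to a unitary on $\cle:=\cld_{\wt{T}_1}\oplus\cld_{\wt{T}_2}$. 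This is the only genuine obstacle, and removing it is precisely what the enlargement accomplishes.

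The isometry $\wt{U}$ maps $\clm:=\overline{\mbox{span}}\{D_{\wt{T}_1}x\oplus D_{\wt{T}_2}\wt{T}_1^*x:x\in\wt{\clh}\}$ onto $\cln:=\overline{\mbox{span}}\{D_{\wt{T}_1}\wt{T}_2^*x\oplus D_{\wt{T}_2}x:x\in\wt{\clh}\}$, and it extends to a unitary on $\cle$ if and only if $\dim(\cle\ominus\clm)=\dim(\cle\ominus\cln)$. Writing $x=(h,g)\in\clh\oplus\clg$ and using $D_{\wt{T}_j}=D_{T_j}\oplus I_{\clg}$ one gets $D_{\wt{T}_1}x=(D_{T_1}h,g)$ and $D_{\wt{T}_2}\wt{T}_1^*x=(D_{T_2}T_1^*h,0)$, so under the identification $\cle=\cld_{T_1}\oplus\clg\oplus\cld_{T_2}\oplus\clg$ the subspace $\clm$ is $\{(u,g,v,0):(u,v)\in\clm_0,\ g\in\clg\}$, where $\clm_0:=\overline{\mbox{span}}\{D_{T_1}h\oplus D_{T_2}T_1^*h:h\in\clh\}\subseteq\cld_{T_1}\oplus\cld_{T_2}$; consequently $\cle\ominus\clm$ consists of all $(u,0,v,g)$ with $(u,v)\in(\cld_{T_1}\oplus\cld_{T_2})\ominus\clm_0$ and $g\in\clg$, so $\dim(\cle\ominus\clm)=\dim\big((\cld_{T_1}\oplus\cld_{T_2})\ominus\clm_0\big)+\dim\clg$. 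The same computation gives $\dim(\cle\ominus\cln)=\dim\big((\cld_{T_1}\oplus\cld_{T_2})\ominus\cln_0\big)+\dim\clg$ with $\cln_0:=\overline{\mbox{span}}\{D_{T_1}T_2^*h\oplus D_{T_2}h:h\in\clh\}$. Since $\clg$ is infinite-dimensional and $\dim\clg\ge\dim(\cld_{T_1}\oplus\cld_{T_2})$, while $(\cld_{T_1}\oplus\cld_{T_2})\ominus\clm_0$ and $(\cld_{T_1}\oplus\cld_{T_2})\ominus\cln_0$ are subspaces of $\cld_{T_1}\oplus\cld_{T_2}$, both of the above dimensions equal $\dim\clg$ and hence agree. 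Therefore $\wt{U}$ extends to a unitary on $\cle$, and the remaining construction of Theorem~\ref{main1} produces a pure pair of commuting isometries on $H^2_{\cle}(\D)$ --- associated with the triple $(\cle,\wt{U},P)$ --- that dilates $(\wt{T}_1,\wt{T}_2)$, hence dilates $(T_1,T_2)$. This completes the proof.
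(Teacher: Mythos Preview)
Your proof is correct. Both your argument and the paper's hinge on the same obstruction --- in infinite dimensions the isometry $U$ of~(\ref{U-h}) need not extend to a unitary on $\cld_{T_1}\oplus\cld_{T_2}$ --- and both overcome it by padding with an auxiliary infinite-dimensional space so that the codimensions of domain and range are forced to agree. The implementations differ: the paper keeps $(T_1,T_2)$ on $\clh$ but enlarges the coefficient space to $\cle=\cld\oplus\cld_{T_1}\oplus\cld_{T_2}$ for an infinite-dimensional $\cld$, redefining $V$, the inclusions $\iota_j$, and $P$ accordingly, and then extends the padded isometry $U_{\cld}$ to a unitary on this larger $\cle$. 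You instead enlarge the underlying Hilbert space, replacing $(T_1,T_2)$ by $(T_1\oplus 0,\,T_2\oplus 0)$ on $\clh\oplus\clg$, which automatically enlarges each defect space by a copy of $\clg$ and lets you run the construction of Theorem~\ref{main1} verbatim. Your route has the appeal of being a clean reduction (once one notes that dilations compose) and of treating the finite- and infinite-defect cases uniformly; the paper's route avoids the preliminary dilation step and keeps the dilation map in the explicit form $\Pi_V=(I\otimes V)\Pi$ built directly from the Sz.-Nagy--Foias map $\Pi$ of $T_1T_2$, which is the form used in the pull-back argument of Section~\ref{Fact}.
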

\begin{proof}
Let $\dim \cld_{T_1} = \infty$, or $\dim \cld_{T_2} = \infty$ and
$\cld$ be an infinite dimensional Hilbert space. Set
$\cle:=(\cld\oplus\cld_{T_1})\oplus\cld_{T_2}$. We now define
inclusion maps $\iota_1:\cld\oplus\cld_{T_1}\to\cle$ and
$\iota_2:\cld_{T_2}\to \cle$ by
\[
\iota_1(h,h_1) = (h,h_1,0) \quad \mbox{and} \quad \iota_2 h_2 =
(0,0,h_2),\quad (h\in\cld, h_1\in \cld_{T_1}, h_2\in\cld_{T_2})
\]
respectively, and an isometric embedding $V \in \clb(\cld_T; \cle)$
by
\[
V D_T h = (0,D_{T_1}h,D_{T_2}T_1^*h)\quad (h\in\Hil).
\]
We also define the orthogonal projection $P$ by $P = \iota_2
\iota_2^*$. Therefore
\[
P (h_1,h_2,h_3) = (0, 0,h_3) \quad \quad  ((h_1,h_2,h_3) \in \cle).
\]
Finally, since
\[
U_{\cld} \left(0_{\cld}, D_{T_1}h, D_{T_2}T_1^* h
\right)=\left(0_{\cld}, D_{T_1} T_2^*h, D_{T_2}h \right) \quad \quad
(h \in \clh),
\]
defines an isometry from $\{0_{\cld}\}\oplus\{D_{T_1} h \oplus
D_{T_2} T_1^* h : h \in \clh\}$ to $\{0_{\cld}\}\oplus\{D_{T_1}
T_2^* h \oplus D_{T_2} h : h \in \clh\}$, we can therefore extend
$U_{\cld}$ to a unitary, denoted again by $U_{\cld}$, acting on
$\cle$. With these notations we define unitary operators
\[
 U_1=\begin{bmatrix}
U_\cld P& U_\cld\iota_1\\
\iota_1^*& 0
\end{bmatrix} \in \clb(\cle\oplus(\cld\oplus\cld_{T_1})) \quad \text{and } U_2=\begin{bmatrix}
P^{\perp}U_{\cld}^* &\iota_2\\
\iota_2^*U_{\cld}^*& 0
\end{bmatrix}
\in \clb(\cle \oplus\cld_{T_2}).
\]
The rest of the proof proceeds in the same way as in Theorem
\ref{main1}. This completes the proof.
\end{proof}

The main inconvenience of our approach seems to be the nonuniqueness
of the triple $(\cle, U, P)$. This issue is closely related to the
nonuniqueness of Ando dilation \cite{An} and solutions of commutant
lifting theorem \cite{FF}.

It is also important to note that Theorem \ref{main2} is a sharper
version of Ando dilation \cite{An} for pure pairs of commuting
contractions. More precisely, one can dilate a pure pair of
commuting contractions to a pure pair of commuting isometries, in
the sense of Berger, Coburn and Lebow. In the context of concrete
isometric dilations for commuting pairs of pure contractions, see
\cite{AM1} and \cite{DS}.

\newsection{Factorizations}\label{Fact}

Let $(T_1, T_2)$ be a pair of commuting contractions on $\clh$ and
$T = T_1 T_2$ be a pure contraction. Then by Theorem \ref{NF} we can
realize $T$ as $P_{\clq} M_z|_{\clq}$ where $\clq = \mbox{ran} \Pi =
\Pi \clh$ is the Sz.-Nagy and Foias model space and $\Pi : \clh
\raro H^2_{\cld_T}(\D)$ is the minimal isometric dilation of $T$
(see (\ref{min})).

In this section we will show that $T_1$ and $T_2$ can be realized as
compressions of two $\clb(\cld_T)$-valued polynomials of degree
$\leq 1$ in the Sz.-Nagy and Foias model space $\clq$ of the pure
contraction $T$.


Let $\Pi_V : \clh \raro H^2_{\cle}(\D)$ be the isometric dilation as
in Theorems \ref{main1} and \ref{main2}, that is,
\[
\Pi_{V} T_1^* = M_{\Phi}^* \Pi_V \quad \mbox{and}\quad \Pi_V T_2^* =
M_{\Psi}^* \Pi_V.
\]
Then it follows from ~\ref{piv} that
\[
\Pi T_1^* = (I \otimes {V}^*) M_{\Phi}^* (I \otimes {V}) \Pi =
M_{\vp}^* \Pi,
\]
where
\[
\vp(z) = {V}^* \Phi(z) {V} \quad \quad (z \in \D),
\]
and $V\in \mathcal{B}(\cld_T;\cle)$ is an isometry.
Similarly, we derive
\[
\Pi T_2^* = (I \otimes {V}^*) M_{\Psi}^* (I \otimes {V}) \Pi =
M_{\psi}^* \Pi,
\]
where
\[
\psi(z) = {V}^* \Psi(z) {V} \quad \quad (z \in \D).
\]
In particular, $\mbox{ran~} \Pi$ is a joint $(M_{\vp}^*,
M_{\psi}^*)$-invariant subspace and by construction of $\Pi$
it follows that
\[
\begin{split}
\Pi T^* = M_z^* \Pi,
\end{split}
\]
and $\mbox{ran~} \Pi$ is a also a $M_z^*$-invariant subspace of
$H^2_{\cld_T}(\D)$. We have thus proved the following theorem.

\begin{thm}\label{factor-Q}
Let $T$ be a pure contraction on a Hilbert space $\clh$ and let $T
\cong P_{\clq} M_z|_{\clq}$ be the Sz.-Nagy and Foias representation
of $T$ as in Theorem \ref{NF}. If $T = T_1 T_2$, for some commuting
pair of contractions $(T_1, T_2)$ on $\clh$, then there exists
$\clb(\cld_T)$-valued polynomials $\vp$ and $\psi$ of degree $ \leq
1$ such that $\clq$ is a joint $(M_{\vp}^*, M_{\psi}^*)$-invariant
subspace and
\[
(T_1, T_2) \cong (P_{\clq} M_{\vp}|_{\clq}, P_{\clq}
M_{\vp}|_{\clq}).
\]
In particular,
\[
P_{\clq} M_z|_{\clq} = P_{\clq} M_{\vp \psi}|_{\clq} = P_{\clq}
M_{\vp \phi}|_{\clq}.
\]
\end{thm}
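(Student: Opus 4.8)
The plan is to combine the pure‐pair isometric dilation produced in Theorems \ref{main1} and \ref{main2} with the canonical factorization of dilations from \cite{JS3}, and then to \emph{pull back} the multiplier representations of $T_1$ and $T_2$ through the minimal dilation $\Pi$ of $T=T_1T_2$. Concretely, the steps are as follows. First, since $(T_1,T_2)$ is a pure pair of commuting contractions, Theorem \ref{main1} (when the defect spaces are finite dimensional) or Theorem \ref{main2} (in general) supplies a triple $(\cle,U,P)$ together with the isometric dilation $\Pi_V = (I_{H^2(\D)}\otimes V)\Pi$ satisfying
\[
\Pi_V T_1^* = M_\Phi^* \Pi_V,\qquad \Pi_V T_2^* = M_\Psi^* \Pi_V,\qquad \Pi_V T^* = M_z^* \Pi_V,
\]
where $\Phi,\Psi$ are the degree‐$\le 1$ inner multipliers of \eqref{PP} and $V\in\clb(\cld_T;\cle)$ is the isometry built in those proofs.

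Second, I would invoke the canonical factorization theorem for isometric dilations (Theorem 4.1 of \cite{JS3}): because $\Pi_V$ is an isometric dilation of the pure contraction $T$ and $\Pi:\clh\to H^2_{\cld_T}(\D)$ is the minimal isometric dilation of $T$, there is a unique isometry $\tilde V\in\clb(\cld_T;\cle)$ with $\Pi_V = (I\otimes\tilde V)\Pi$. Since $\Pi_V$ was defined precisely as $(I\otimes V)\Pi$ and $\Pi$ has dense range in the sense of \eqref{min}, the uniqueness clause forces $\tilde V = V$. This identification is the linchpin: it lets me transport the $M_\Phi^*$- and $M_\Psi^*$-intertwining relations for $\Pi_V$ down to $\Pi$. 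Explicitly, from $\Pi_V T_1^* = M_\Phi^*\Pi_V$ and $\Pi_V=(I\otimes V)\Pi$, and using that $(I\otimes V)$ is an isometry intertwining $M_z$'s, I get
\[
(I\otimes V)\Pi T_1^* = M_\Phi^*(I\otimes V)\Pi,
\]
so composing with $(I\otimes V)^*$ on the left and using $(I\otimes V)^*(I\otimes V)=I$ yields $\Pi T_1^* = (I\otimes V^*)M_\Phi^*(I\otimes V)\Pi$. The operator $(I\otimes V^*)M_\Phi(I\otimes V)$ is exactly $M_\vp$ with $\vp(z)=V^*\Phi(z)V$, which is a $\clb(\cld_T)$‐valued polynomial of degree $\le 1$ since $\Phi$ is. Hence $\Pi T_1^* = M_\vp^*\Pi$, and symmetrically $\Pi T_2^* = M_\psi^*\Pi$ with $\psi(z)=V^*\Psi(z)V$.

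Third, I would read off the conclusions. The relation $\Pi T_1^* = M_\vp^*\Pi$ means $\Ran\Pi=\clq$ is $M_\vp^*$‐invariant and $T_1\cong P_\clq M_\vp|_\clq$; likewise for $T_2$ and $M_\psi$; this gives $(T_1,T_2)\cong(P_\clq M_\vp|_\clq, P_\clq M_\psi|_\clq)$ and the joint $(M_\vp^*,M_\psi^*)$‐invariance of $\clq$. Finally, since $M_\Phi M_\Psi = M_\Psi M_\Phi = M_z$ on $H^2_\cle(\D)$ (the Berger–Coburn–Lebow relation), compressing to $\clq$ and using $\Pi T^* = M_z^*\Pi$ (which holds by construction of the minimal dilation of $T$) gives
\[
P_\clq M_z|_\clq = T = T_1 T_2 = P_\clq M_\vp|_\clq\, P_\clq M_\psi|_\clq.
\]
One must be a little careful here: in general $P_\clq M_\vp M_\psi|_\clq \ne (P_\clq M_\vp|_\clq)(P_\clq M_\psi|_\clq)$, but since $\clq$ is $M_\psi^*$‐invariant, $M_\psi|_\clq$ maps $\clq$ into $\clq$ is false; rather $M_\psi$ does not preserve $\clq$, yet the \emph{adjoint} identity $M_\psi^*M_\vp^*\Pi = \Pi T_2^* T_1^* = \Pi T^* = M_z^*\Pi$ holds, and taking adjoints after restricting to $\clq$ gives $P_\clq M_z|_\clq = P_\clq M_{\vp\psi}|_\clq$; the symmetric computation with $T^* = T_1^* T_2^*$ gives $P_\clq M_{\psi\vp}|_\clq$ as well. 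I expect this last bookkeeping about compressions versus genuine products — making sure the intertwining identities are applied on the correct (adjoint) side so that the invariance hypotheses are actually available — to be the only real subtlety; everything else is a direct assembly of Theorem \ref{main1}/\ref{main2}, Theorem \ref{dilation}, and the uniqueness in the factorization theorem of \cite{JS3}.
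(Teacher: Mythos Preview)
Your proposal is correct and follows essentially the same approach as the paper: invoke Theorems \ref{main1}/\ref{main2} to obtain $\Pi_V=(I\otimes V)\Pi$ with $\Pi_V T_j^*=M_{\Phi_j}^*\Pi_V$, apply the canonical factorization theorem from \cite{JS3} and its uniqueness clause to identify $\tilde V=V$, and then compress by $(I\otimes V^*)$ to obtain $\Pi T_j^*=M_{\vp_j}^*\Pi$ with $\vp_j(z)=V^*\Phi_j(z)V$. Your added discussion of the ``In particular'' clause, working on the adjoint side so that the joint $(M_\vp^*,M_\psi^*)$-invariance of $\clq$ is genuinely available, is a point the paper leaves implicit.
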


It is important to note that $P_{\clq} M_{\vp \psi}|_{\clq} =
P_{\clq} M_{\psi \vp}|_{\clq}$, even though, in general
\[
\vp \psi \neq \psi \vp.
\]

A reformulation of Theorem \ref{factor-Q} is the following:

\begin{cor}\label{cor1}
Let $T$ be a pure contraction on a Hilbert space $\clh$ and let $T
\cong P_{\clq} M_z|_{\clq}$ be the Sz.-Nagy and Foias representation
of $T$ as in Theorem \ref{NF}. Then $T = T_1 T_2$, for some
commuting pair of contractions $(T_1, T_2)$ on $\clh$, if and only
if there exist $\clb(\cld_T)$-valued polynomials $\vp$ and $\psi$ of
degree $ \leq 1$ such that $\clq$ is a joint $(M_{\vp}^*,
M_{\psi}^*)$-invariant subspace,
\[
P_{\clq} M_z|_{\clq} = P_{\clq} M_{\vp \psi}|_{\clq} = P_{\clq}
M_{\psi \vp}|_{\clq},
\]
and
\[
(T_1, T_2) \cong (P_{\clq} M_{\vp}|_{\clq}, P_{\clq}
M_{\psi}|_{\clq}).
\]
Moreover, there exists a triple $(\cle, U, P)$ and an isometry ${V}
\in \clb(\cld_T; \cle)$ such that
\[
\vp(z) = {V}^* \Phi(z) {V} \mbox{~and~} \psi(z) = {V}^* \Psi(z) {V}
\quad \quad (z \in \D).
\]
\end{cor}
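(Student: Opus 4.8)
The plan is to deduce the corollary from the machinery already in place, treating the two implications separately. For the forward direction together with the ``moreover'' clause, suppose $T=T_1T_2$ for a commuting pair of contractions. Theorem~\ref{main2} (or Theorem~\ref{main1} when the defect spaces are finite dimensional) then supplies a triple $(\cle,U,P)$, the associated pure pair of commuting isometries $(M_\Phi,M_\Psi)$ on $H^2_\cle(\D)$ with $M_\Phi M_\Psi=M_\Psi M_\Phi=M_z$, and an isometric dilation $\Pi_V=(I\otimes V)\Pi:\clh\to H^2_\cle(\D)$ satisfying $\Pi_V T_1^*=M_\Phi^*\Pi_V$ and $\Pi_V T_2^*=M_\Psi^*\Pi_V$, where $\Pi$ is the minimal isometric (Sz.-Nagy and Foias) dilation of $T$. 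Feeding $\Pi_V$ and $\Pi$ into the canonical factorization of dilations (Theorem~4.1 of \cite{JS3}) identifies the intertwining isometry as $V$ itself and gives $\Pi T_1^*=M_\vp^*\Pi$ and $\Pi T_2^*=M_\psi^*\Pi$ with $\vp(z)=V^*\Phi(z)V$ and $\psi(z)=V^*\Psi(z)V$; since $\Phi,\Psi$ are affine in $z$ by \eqref{PP}, so are $\vp,\psi$, i.e.\ they are $\clb(\cld_T)$-valued polynomials of degree $\le 1$. The remaining assertions of this direction --- that $\clq=\mathrm{ran}\,\Pi$ is jointly $(M_\vp^*,M_\psi^*)$-invariant, that $(T_1,T_2)\cong(P_\clq M_\vp|_\clq,P_\clq M_\psi|_\clq)$, and that $P_\clq M_z|_\clq=P_\clq M_{\vp\psi}|_\clq=P_\clq M_{\psi\vp}|_\clq$ --- are exactly the content of Theorem~\ref{factor-Q}.

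For the converse I would argue by a direct compression computation. Assume $\vp,\psi$ are $\clb(\cld_T)$-valued polynomials of degree $\le 1$ such that $\clq$ is jointly $(M_\vp^*,M_\psi^*)$-invariant and $P_\clq M_z|_\clq=P_\clq M_{\vp\psi}|_\clq=P_\clq M_{\psi\vp}|_\clq$, and put $A:=P_\clq M_\vp|_\clq$ and $B:=P_\clq M_\psi|_\clq$ on $\clq$. Coinvariance gives $A^*=M_\vp^*|_\clq$ and $B^*=M_\psi^*|_\clq$, and since $M_\vp^*\clq\subseteq\clq$ one computes $(AB)^*=B^*A^*=M_\psi^*M_\vp^*|_\clq=M_{\vp\psi}^*|_\clq$; because $M_{\vp\psi}^*=M_\psi^*M_\vp^*$ also leaves $\clq$ invariant, this equals $(P_\clq M_{\vp\psi}|_\clq)^*$, so $AB=P_\clq M_{\vp\psi}|_\clq$, and likewise $BA=P_\clq M_{\psi\vp}|_\clq$. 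The two-sided product hypothesis then forces $AB=BA=P_\clq M_z|_\clq$. Transporting $A$ and $B$ to $\clh$ by the unitary $\Pi:\clh\to\clq$ of the Sz.-Nagy and Foias representation of $T$ produces commuting contractions $T_1,T_2$ with $T_1T_2=\Pi^*(P_\clq M_z|_\clq)\Pi=T$ and $(T_1,T_2)\cong(P_\clq M_\vp|_\clq,P_\clq M_\psi|_\clq)$, which is precisely the factorization sought; the final sentence of the corollary is then just the formulas $\vp(z)=V^*\Phi(z)V$, $\psi(z)=V^*\Psi(z)V$ already obtained in the forward direction.

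I do not anticipate a serious obstacle, since the forward implication and the transfer-function formulas are assembled in Section~\ref{Fact} and the converse is a short adjoint computation. The one point needing care is the converse's handling of the hypotheses: the joint coinvariance of $\clq$ is exactly what licenses pulling $P_\clq$ through the operator products $M_\vp M_\psi$ and $M_\psi M_\vp$, and the \emph{two-sided} identity $P_\clq M_{\vp\psi}|_\clq=P_\clq M_{\psi\vp}|_\clq$ is what delivers $T_1T_2=T_2T_1$ even though $\vp\psi\ne\psi\vp$ in general. Contractivity of $T_1,T_2$ is automatic, inherited either from $\|M_\vp\|\le1$ and $\|M_\psi\|\le1$ (via $\vp=V^*\Phi V$, $\psi=V^*\Psi V$ with $V$ isometric and $\Phi,\Psi$ inner) or directly from the hypothesis that $(T_1,T_2)$ is a commuting pair of contractions; the rest is routine bookkeeping with the intertwining relations $\Pi T^*=M_z^*\Pi$, $\Pi T_1^*=M_\vp^*\Pi$, $\Pi T_2^*=M_\psi^*\Pi$.
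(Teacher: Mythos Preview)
Your proposal is correct and matches the paper's approach: the paper presents Corollary~\ref{cor1} simply as ``a reformulation of Theorem~\ref{factor-Q}'' with no separate proof, and your forward direction together with the ``moreover'' clause is precisely the argument of Section~\ref{Fact} culminating in Theorem~\ref{factor-Q}, while your converse is the routine coinvariance computation the paper leaves implicit.

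One small wobble worth tightening: in your last paragraph, the first justification for contractivity of $T_1,T_2$ in the converse direction (via $\vp=V^*\Phi V$, $\psi=V^*\Psi V$) is circular, since those formulas are only produced by the forward direction. The clean reading of the corollary is that the clause ``$(T_1,T_2)\cong(P_\clq M_\vp|_\clq,P_\clq M_\psi|_\clq)$'' is part of the right-hand side of the biconditional, so $T_1,T_2$ are handed to you as contractions and your second justification is the operative one; alternatively, one may simply regard the converse as the tautological observation that compressions to a jointly coinvariant subspace commute and multiply correctly, which is what the paper's phrase ``reformulation'' signals.
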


\newsection{von Neumann inequality}
In this section we consider the von Neumann inequality for
 pure pair of commuting contractions with finite dimensional
defect spaces. We show that for such
a pair there exists a variety in the bidisc where the von Neumann
inequality holds.

\begin{thm}\label{vn_in}
Let $(T_1,T_2)$ be a pure pair of commuting contractions on $\Hil$
and $\mbox{dim~}\cld_{T_i}<\infty$, $i=1,2$. Then there exists an algebraic
variety $V$ in $\overline{\D}^2$ such that
\[
\|p(T_1,T_2)\|\le \sup_{(z_1,z_2)\in  V}|p(z_1,z_2)| \quad \quad (p
\in \mathbb{C}[z_1, z_2]).
\]
Moreover, if $m = \dim(\cld_{T_1} \oplus \cld_{T_2})$, then there
exists a pure pair of commuting isometries $(M_{\Phi}, M_{\Psi})$ on
$H^2_{\mathbb{C}^m}(\mathbb{D})$ such that
\[
V = \{(z_1, z_2) \in \overline{\D}^2: \det(\Phi(z_1
z_2) - z_1 I) =0 \mbox{~and~} \det(\Psi(z_1 z_2)- z_2 I)=0\}.
\]
\end{thm}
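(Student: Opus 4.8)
The plan is to combine Theorem~\ref{main1} (the pure pair dilates to a Berger--Coburn--Lebow pair $(M_\Phi, M_\Psi)$ on $H^2_{\mathbb{C}^m}(\D)$ with $m = \dim(\cld_{T_1}\oplus\cld_{T_2})$) with the standard fact that the von Neumann inequality for a commuting pair passes from a dilation to the dilated pair. Concretely, let $\clq = \Pi_V\clh \subseteq H^2_{\mathbb{C}^m}(\D)$ be the joint $(M_\Phi^*, M_\Psi^*)$-invariant subspace furnished by Theorem~\ref{main1}, so that $(T_1, T_2) \cong (P_\clq M_\Phi|_\clq, P_\clq M_\Psi|_\clq)$. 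For any $p \in \mathbb{C}[z_1, z_2]$ one has $p(T_1, T_2) \cong P_\clq\, p(M_\Phi, M_\Psi)|_\clq = P_\clq\, M_{p(\Phi,\Psi)}|_\clq$, whence
\[
\|p(T_1, T_2)\| \le \|M_{p(\Phi, \Psi)}\| = \sup_{\zeta \in \D}\|p(\Phi(\zeta), \Psi(\zeta))\|_{\clb(\mathbb{C}^m)}.
\]
So the whole problem reduces to controlling $\sup_{\zeta\in\D}\|p(\Phi(\zeta),\Psi(\zeta))\|$ by a supremum over a variety in $\D^2$.

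First I would record the algebraic structure of the symbols. Since $\Phi(\zeta) = (P + \zeta P^\perp)U^*$ and $\Psi(\zeta) = U(P^\perp + \zeta P)$, a direct computation gives $\Phi(\zeta)\Psi(\zeta) = \Psi(\zeta)\Phi(\zeta) = \zeta I$ for every $\zeta\in\D$ (this is the $M_\Phi M_\Psi = M_z$ identity evaluated pointwise). Thus for fixed $\zeta$, $\Phi(\zeta)$ and $\Psi(\zeta)$ are commuting $m\times m$ matrices whose product is the scalar $\zeta I$; in particular each is invertible when $\zeta \ne 0$, with $\Psi(\zeta) = \zeta\,\Phi(\zeta)^{-1}$. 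Being commuting matrices, they are simultaneously (upper-)triangularizable: there is a unitary $W_\zeta$ with $W_\zeta^* \Phi(\zeta) W_\zeta$ and $W_\zeta^* \Psi(\zeta) W_\zeta$ both upper triangular, with diagonal entries $\lambda_1(\zeta),\dots,\lambda_m(\zeta)$ and $\mu_1(\zeta),\dots,\mu_m(\zeta)$ respectively, paired so that $\lambda_j(\zeta)\mu_j(\zeta) = \zeta$ for all $j$. These eigenvalues are exactly the roots of $\det(\Phi(\zeta) - \lambda I) = 0$ and $\det(\Psi(\zeta) - \mu I) = 0$, which also sit in $\overline{\D}$ since $\Phi,\Psi$ are contractive multipliers (in fact isometric, so $\|\Phi(\zeta)\|\le 1$); moreover $\Phi(\zeta)$ is unitary on $\partial\D$, so genuine eigenvalues in the open bidisc arise from $\zeta$ in the open disc.

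Next comes the key reduction, which is the essential content of the recent sharp-von-Neumann-inequality circle of ideas (Das--Sarkar, and Agler--Knese type arguments): for a polynomial $p$ and a fixed $\zeta$, $\|p(\Phi(\zeta),\Psi(\zeta))\| \le \max_j |p(\lambda_j(\zeta), \mu_j(\zeta))|$ would hold immediately if the matrices were \emph{jointly diagonalizable}, but because of nontrivial Jordan structure one only gets $\|p(\Phi(\zeta),\Psi(\zeta))\|$ bounded in terms of $p$ and its derivatives at the eigenvalue pairs. The clean way around this — and the step I expect to be the main obstacle — is not to bound $p$ evaluated at the compression, but to bound the \emph{multiplier norm} $\|M_{p(\Phi,\Psi)}\|$ directly. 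Here one uses that $M_\Phi$ and $M_\Psi$ are commuting isometries on $H^2_{\mathbb{C}^m}(\D)$ with $M_\Phi M_\Psi = M_z$; by the Berger--Coburn--Lebow theory (and Theorem~\ref{NF} applied fiberwise), the joint spectrum / Taylor spectrum of the pair $(M_\Phi, M_\Psi)$, or more precisely the set governing the spectral/von-Neumann bound, is exactly the variety
\[
V = \{(z_1,z_2) \in \D\times\D : \det(\Phi(z_1 z_2) - z_1 I) = 0 \text{ and } \det(\Psi(z_1 z_2) - z_2 I) = 0\}.
\]
One shows that $(z_1, z_2) \in V$ precisely when $z_1$ is an eigenvalue of $\Phi(z_1 z_2)$ with eigenvector $v$ satisfying also $\Psi(z_1 z_2) v = z_2 v$ — and the constraint $z_1 z_2 = \zeta$ is automatic from $\Phi(\zeta)\Psi(\zeta) = \zeta I$ acting on $v$. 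Then the distinguished variety / inner-multiplier argument (cf. the one-dimensional Sz.-Nagy--Foias model on each eigenvector, or the Agler--McCarthy distinguished variety machinery for $H^\infty(\D^2)$) yields $\|M_{p(\Phi,\Psi)}\| = \sup_{(z_1,z_2)\in V}|p(z_1,z_2)|$, and in any case the inequality $\|M_{p(\Phi,\Psi)}\| \le \sup_{(z_1,z_2)\in V}|p(z_1,z_2)|$, which is all we need.

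Assembling the pieces: the chain $\|p(T_1,T_2)\| \le \|M_{p(\Phi,\Psi)}\| \le \sup_{(z_1,z_2)\in V}|p(z_1,z_2)|$ gives the stated inequality, and the explicit description of $V$ in terms of $\Phi, \Psi$ on $H^2_{\mathbb{C}^m}(\D)$ is exactly the variety written above, with $m = \dim(\cld_{T_1}\oplus\cld_{T_2})$ as produced by Theorem~\ref{main1}. One should finally check that $V$ is genuinely a (one-dimensional algebraic) variety and not all of $\D^2$: this follows because $\det(\Phi(z_1 z_2) - z_1 I)$ is a nonzero holomorphic function of $(z_1, z_2)$ on $\D^2$ (it is not identically zero since, e.g., for $z_1$ near $1$ and $z_2 = 0$ one has $\Phi(0) = PU^*$ whose eigenvalues need not include $z_1$), so its zero set is a proper analytic subvariety; intersecting with the second determinant condition only cuts it down further. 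The main obstacle, as noted, is making rigorous the passage from the compression $P_\clq p(M_\Phi, M_\Psi)|_\clq$ to the multiplier norm and then to the variety — but the first step is trivial (compression decreases norm) and the second is the heart of the Berger--Coburn--Lebow plus distinguished-variety analysis, which one invokes via the fiberwise simultaneous triangularization together with the inner structure of $\Phi$ on $\partial\D$.
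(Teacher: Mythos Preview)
Your overall architecture is exactly the paper's: invoke Theorem~\ref{main1} to dilate $(T_1,T_2)$ to $(M_\Phi,M_\Psi)$ on $H^2_{\mathbb{C}^m}(\D)$, compress, and bound $\|p(T_1,T_2)\|$ by $\|M_{p(\Phi,\Psi)}\|$. The divergence --- and the gap --- is in how you pass from $\|M_{p(\Phi,\Psi)}\|$ to a supremum over the variety $V$.

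You take the supremum over $\zeta\in\D$, correctly note that for interior $\zeta$ the commuting matrices $\Phi(\zeta),\Psi(\zeta)$ need not be simultaneously diagonalizable (``nontrivial Jordan structure''), and then try to bypass this by appealing to Taylor spectra, Agler--McCarthy distinguished-variety machinery, and ``fiberwise simultaneous triangularization together with the inner structure of $\Phi$ on $\partial\D$.'' None of this is actually carried out; the crucial inequality $\|M_{p(\Phi,\Psi)}\|\le\sup_V|p|$ is asserted, not proved. So as written there is a genuine hole at precisely the step you flagged as the main obstacle.

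The paper's fix is elementary and you almost stumbled on it when you observed that $\Phi(\zeta)$ is unitary on $\partial\D$. The multiplier norm is the boundary supremum:
\[
\|M_{p(\Phi,\Psi)}\|\le \sup_{z\in\mathbb{T}}\|p(\Phi(z),\Psi(z))\|_{\clb(\mathbb{C}^m)}.
\]
For $z\in\mathbb{T}$ both $\Phi(z)$ and $\Psi(z)$ are \emph{unitary} $m\times m$ matrices (the symbols are inner), and commuting unitaries are simultaneously \emph{diagonalizable}, not merely triangularizable. Hence $\|p(\Phi(z),\Psi(z))\|=\max_j|p(\lambda_j,\mu_j)|$ exactly, where $(\lambda_j,\mu_j)$ runs over the joint spectrum $\sigma(\Phi(z),\Psi(z))$. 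Your own computation $\Phi(z)\Psi(z)=zI$ then forces $\lambda_j\mu_j=z$, so each such pair satisfies both determinant conditions with $z_1z_2=z$; these pairs lie on $\partial V$, and a continuity argument gives the bound by $\sup_V|p|$. No Jordan blocks, no Taylor spectrum, no distinguished-variety theorem --- just normality on the circle. Replace your third and fourth paragraphs with this boundary argument and the proof is complete.
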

\begin{proof}
Let $\cle = \cld_{T_1} \oplus \cld_{T_2}$. Then by
Theorem~\ref{main1}, there exists a pure pair of commuting isometries
$(M_{\Phi}, M_{\Psi})$ on $H^2_{\cle}(\mathbb{D})$ and a joint
$(M_{\Phi}^*, M_{\Psi}^*)$-invariant subspace $\clq$ of
$H^2_{\cle}(\mathbb{D})$ such that $T_1 \cong P_{\clq}
M_{\Phi}|_{\clq}$ and $T_2 \cong P_{\clq} M_{\Psi}|_{\clq}$. Then
for each $p\in \mathbb{C}[z_1,z_2]$, we have
\begin{align}
\label{vN} \|p(T_1,T_2)\|_{\clb(\clh)} & = \|P_{\clq}
p(M_{\Phi},M_{\Psi})|_{\clq}\|_{\clb(\clq)}
\\
& \le \|p(M_{\Phi},M_{\Psi})\|_{\clb(H^2_{\cle}(\mathbb{D}))} \nonumber \\
&= \| M_{p(\Phi, \Psi)}\|_{\clb(H^2_{\cle}(\mathbb{D}))} \nonumber\\
&\le \sup_{z\in \mathbb{T}}\|p\left(\Phi(z),\Psi(z)\right)\|_{\clb(\cle)} \nonumber\\
&\le \sup\{|p(\lambda_1,\lambda_2)|: (\lambda_1,\lambda_2)\in
\sigma(\Phi(z),\Psi(z)), z\in\mathbb{T}\},
\end{align}
where we denote $\sigma(\Phi(z),\Psi(z))$ by the joint spectrum of
the commuting pair of unitary matrices $(\Phi(z), \Psi(z))$, $z \in
\mathbb{T}$. Now observe that if $(\lambda_1,\lambda_2)\in
\sigma(\Phi(z),\Psi(z))$ for some $z\in\mathbb{T}$, then there
exists a non-zero $h\in\cle$ such that $\Phi(z)h=\lambda_1h$ and
$\Psi(z)h=\lambda_2h$. Then $zh=\Phi(z)\Psi(z)h=\lambda_1\lambda_2h$
and hence $z=\lambda_1\lambda_2$. With this observation we have
\[
\{(\lambda_1,\lambda_2)\in \sigma(\Phi(z),\Psi(z)):
z\in\mathbb{T}\}\subset \partial V,
\]
where
\[
V_1= \{(\lambda_1,\lambda_2)\in\overline{\D}^2:
\mbox{det}(\Phi(\lambda_1\lambda_2)-\lambda_1I)=0\},
\]
and
\[
V_2=\{(\lambda_1,\lambda_2)\in\overline{\D}^2:
\mbox{det}(\Psi(\lambda_1\lambda_2)-\lambda_2I)=0\},
\]
and
\[
V=V_1\cap V_2.
\]
Now since $\Phi$ and $\Psi$ are matrix valued polynomial, the
variety $V$ is an algebraic variety in $\overline{\D}^2$. Note also
that equation ~\eqref{vN} implies that
\[
\|p(T_1,T_2)\|\le \sup_{(z_1,z_2)\in \partial V}|p(z_1,z_2)|,
\]
and hence
\[
\|p(T_1,T_2)\|\le \sup_{(z_1,z_2)\in  V}|p(z_1,z_2)|,
\]
for all $p\in \mathbb{C}[z_1,z_2]$. This completes the proof.
\end{proof}

With the hypotheses of Theorem \ref{vn_in}, let $(\cle, U, P)$ be
the triple corresponding to the pure isometric pair $(M_{\Phi},
M_{\Psi})$. Let also assume that $PU^*$ and $UP^{\perp}$ be
completely non-unitary. It now follows from \cite[Proposition
4.1]{DS} that $\Phi(z)$ and $\Psi(z)$, $z\in\D$, does not have any
unimodular eigenvalue. Therefore
\[
V \cap \{ (\D\times \mathbb{T}) \cup (\mathbb{T}\times \D)
\}=\emptyset,
\]
and hence
\[
V\cap \partial \D^2= V\cap \mathbb{T}^2.
\]
This allows one to replace the algebraic variety $V$ in Theorem
~\ref{vn_in} by an algebraic distinguished variety (see \cite{AM1})
\[
\tilde{V} = \tilde{V}_1\cap\tilde{V}_2,
\]
in $\D^2$, where
\[
\tilde{V}_1= \{(\lambda_1,\lambda_2)\in\D^2:
\mbox{det}(\Phi(\lambda_1\lambda_2)-\lambda_1I)=0\},
\]
and
\[
\tilde{V}_2=\{(\lambda_1,\lambda_2)\in\D^2:
\mbox{det}(\Psi(\lambda_1\lambda_2)-\lambda_2I)=0\}.
\]

\vspace{0.2in}

\textit{Acknowledgement:} The authors are grateful to the referee
for pointing out an error in an earlier version and for numerous
suggestions. The first author's research work is supported by
DST-INSPIRE Faculty Fellowship No. DST/INSPIRE/04/2015/001094. The
research of the second author is supported in part by NBHM (National
Board of Higher Mathematics, India) Research Grant NBHM/R.P.64/2014

\end{document}